\def\Z{\mathbb{Z}}
\def\SS{{\mathcal S}}
\def\Z{{\mathbb Z}}
\def\Ann{\mathrm{Ann}}
\def\onto{\twoheadrightarrow}
\def\isom{\xrightarrow{\sim}}
\numberwithin{equation}{section}
\newtheorem{thm}[equation]{Theorem}
\newtheorem{lem}[equation]{Lemma}
\newtheorem{prop}[equation]{Proposition}
\theoremstyle{definition}
\newtheorem{algorithm}[equation]{Algorithm}
\title[Determining cyclicity of finite modules]{Determining cyclicity of finite modules}
\author[H.\ W.\ Lenstra, Jr.]{H.\ W.\ Lenstra, Jr.}
\address{Mathematisch Instituut,
Universiteit Leiden,
Postbus 9512,
2300 RA Leiden,
The Netherlands}
\email{hwl@math.leidenuniv.nl}
\author[A.\ Silverberg]{A.\ Silverberg}
\address{Department of Mathematics, Rowland Hall,
University of California, 
Irvine, CA 92697, USA}
\email{asilverb@uci.edu}
\begin{document}

\keywords{algebraic algorithms, finite rings, cyclic modules}
\thanks{This material is based on research sponsored by DARPA under agreement numbers FA8750-11-1-0248 and FA8750-13-2-0054 and by the Alfred P.~Sloan Foundation. The U.S.\ Government is authorized to reproduce and distribute reprints for Governmental purposes notwithstanding any copyright notation thereon. The views and conclusions contained herein are those of the authors and should not be interpreted as necessarily representing the official policies or endorsements, either expressed or implied, of DARPA or the U.S.\ Government.
}

\begin{abstract} 
We present a deterministic polynomial-time algorithm that determines whether a finite module 
over a finite commutative ring is cyclic, and if it is, outputs a generator.
\end{abstract}

\maketitle

\section{Introduction}

If $R$ is a commutative ring, then an $R$-module $M$ is
cyclic if there exists $y\in M$ such that $M=Ry$.

\begin{thm}
\label{introthm}
There is a deterministic polynomial-time algorithm that, given
a finite commutative ring $R$ and a finite $R$-module $M$,
decides whether there exists $y\in M$ such that $M = Ry$, and if there is, 
finds such a $y$.
\end{thm}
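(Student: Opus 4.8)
The plan is to reduce, via Nakayama's lemma, to the case in which $R$ is a \emph{reduced} finite commutative ring --- that is, a finite product of finite fields --- and then to settle that case by linear algebra while scrupulously avoiding anything (such as factoring a polynomial over a large $\Fp$) that cannot be done deterministically in polynomial time.

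\emph{Reduction to the reduced case.} First I would compute the Jacobson radical $J$ of $R$, which for a finite ring coincides with the nilradical $\mathrm{Nil}(R)$. Splitting $(R,+)$ into its Sylow subgroups writes $R$ as a product of rings of prime power order, so one may assume $|R|$ is a power of a prime $p$; then $p\cdot 1_{R}$ is nilpotent, $R/pR$ is a finite commutative $\Fp$-algebra, and, the $p$-power map being additive in characteristic $p$, the nilradical of $R/pR$ is the kernel of $x\mapsto x^{p^{k}}$ for any $k$ with $p^{k}\ge\dim_{\Fp}(R/pR)$; this kernel is found by linear algebra over $\Fp$ (and $x^{p^{k}}$ by repeated squaring), and pulling it back gives $J$. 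Put $\bar R=R/J$ and $\bar M=M/JM$, both obtained explicitly as a ring and a module over it. By Nakayama's lemma $M$ is cyclic over $R$ if and only if $\bar M$ is cyclic over $\bar R$, and any preimage in $M$ of a generator of $\bar M$ generates $M$ (if $Ry+JM=M$ then, $J$ being the Jacobson radical, $Ry=M$). So it suffices to decide cyclicity of $\bar M$ over the reduced finite ring $\bar R$ and, when possible, to produce a generator.

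\emph{The reduced case.} Here $\bar R\cong\prod_{i}F_{i}$ with the $F_{i}$ finite fields, so every ideal of $\bar R$ is generated by an idempotent; but the decomposition itself is \emph{not} available to the algorithm, only this consequence of it. For an ideal $I$ given by generators, the idempotent $e$ with $e\bar R=I$ is found effectively by solving the linear system $1=e+f$, $e\in I$, $f\in\Ann_{\bar R}(I)$ over $\Fp$. Now take generators $v_{1},\dots,v_{a}$ of $\bar M$ as an abelian group, set $y_{1}=v_{1}$, and inductively let $e_{k}$ be the idempotent generating $\Ann_{\bar R}(y_{k})$ and put $y_{k+1}=y_{k}+e_{k}v_{k+1}$. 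A short computation (using $e_{k}y_{k}=0$, the splitting $\bar M=e_{k}\bar M\oplus(1-e_{k})\bar M$, and commutativity) gives $\Ann_{\bar R}(y_{k+1})=\Ann_{\bar R}(y_{k})\cap\Ann_{\bar R}(v_{k+1})$, so after one pass $\Ann_{\bar R}(y_{a})=\bigcap_{i}\Ann_{\bar R}(v_{i})=\Ann_{\bar R}(\bar M)$. Finally test whether $\bar R y_{a}=\bar M$: if $\bar M$ is cyclic then $\bar M\cong\bar R/\Ann_{\bar R}(\bar M)$ and $\bar R y_{a}\cong\bar R/\Ann_{\bar R}(y_{a})=\bar R/\Ann_{\bar R}(\bar M)$, whence $\bar R y_{a}=\bar M$ by comparing cardinalities; conversely $\bar R y_{a}=\bar M$ trivially shows $\bar M$ cyclic. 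So the test decides cyclicity, and when it succeeds a preimage of $y_{a}$ in $M$ is the desired generator.

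\emph{Complexity, and the crux.} Every step --- the Sylow decomposition, the Frobenius-kernel computation, forming $\bar R$ and $\bar M$, and then the annihilators, the idempotents, the $a$ updates and the final test --- is a bounded number of Smith/Hermite normal form computations over $\Z$ or Gaussian eliminations over $\Fp$ on data of size polynomial in the input (all auxiliary objects being extracted from the given presentations of $R$ and $M$ by routine integer linear algebra), and no power beyond some $p^{k}$ with $p^{k}$ barely exceeding $\dim_{\Fp}(R/pR)$ is ever taken, and that by repeated squaring; so the algorithm runs in polynomial time. The crux --- and the reason the obvious attempts fail --- is precisely the determinism requirement when the residue characteristic is large: one must never need to decompose $\bar R=R/J$ into its field factors (equivalently, factor a polynomial over $\Fp$). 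The idempotent extraction is what circumvents this, since the ideals one can split off ``for free'' over a reduced finite ring are exactly enough to pare a generating set down to one element; confirming that is the heart of the argument. (The decision half alone has an even shorter, $J$-free route: $M$ is cyclic over $R$ if and only if the first Fitting ideal $\mathrm{Fitt}_{1}(M)$ --- the ideal of $R$ generated by the $(a-1)\times(a-1)$ minors of a relation matrix of any $a$-generator presentation of $M$ --- equals $R$, because over a zero-dimensional ring the number of generators of a finitely generated module equals the supremum of its local numbers of generators.)
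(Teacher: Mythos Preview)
Your reduction to the reduced case has a genuine gap: splitting $(R,+)$ into its Sylow subgroups requires knowing the primes dividing $|R|$, and since the input allows $R=\Z/n\Z$ with $n$ written in binary, this amounts to integer factorization---not known to lie in deterministic polynomial time. Smith normal form yields only the invariant factors $d_1\mid\cdots\mid d_k$, not their prime decompositions. More intrinsically, computing $J=\mathrm{Nil}(R)$ is already at least as hard as testing squarefreeness of an integer: for $R=\Z/n\Z$ one has $\mathrm{Nil}(R)=\mathrm{rad}(n)\cdot\Z/n\Z$, so reading off $J$ recovers $\mathrm{rad}(n)$ and in particular decides whether $n$ is squarefree, which is open. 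You are careful never to factor polynomials over $\F_p$ when handling $\bar R$, but the same obstruction bites one level earlier, over $\Z$, and it blocks the very first step ``compute $J$''.

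The paper's algorithm avoids this by never forming $J$. It works over a quotient $A$ of $R$ (initially $A=R$); at each step, with $\mathfrak{a}=\Ann_A(1\otimes x)$ and $\mathfrak{b}=\Ann_A\mathfrak{a}$, either $\mathfrak{a}\cap\mathfrak{b}\neq 0$, in which case this intersection has square zero and one replaces $A$ by $A/(\mathfrak{a}\cap\mathfrak{b})$, or $\mathfrak{a}\cap\mathfrak{b}=0$, in which case a short Artin-ring argument shows $\mathfrak{a}=eA$ for an idempotent $e$ \emph{without assuming $A$ reduced}, and one splits off the factor $A/\mathfrak{a}$. Nilpotents are thus peeled away only as they obstruct the particular idempotent currently needed. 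Granted an oracle for $J$, your reduced-case construction of $y_a$ is correct and pleasant, and your parenthetical Fitting-ideal remark does give a valid $J$-free \emph{decision} procedure; but for producing a generator, the Nakayama reduction as you have arranged it does not go through.
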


We present the algorithm in Algorithm \ref{finiteringsalgor} below.
The inputs are given as follows.
The ring $R$ is given as an abelian group 
by generators and relations,
along with all the products of pairs of generators.
The finite $R$-module $M$ is given as an abelian group, 
and for all generators of the abelian groups $R$ and 
all generators of the abelian group $M$
we are given the module products in $M$.

Our algorithm depends on $R$ being an Artin ring, and should
generalize to finitely generated modules over any commutative Artin ring
that is computationally accessible.

Theorem \ref{introthm} is one of the ingredients of our work
\cite{LenSil,LwS} on lattices with symmetry, and a sketch of the proof
is contained in \cite{LenSil}. 
Previously published algorithms of the same nature appear to
restrict to rings that are algebras over fields.
Subsequently to \cite{LenSil}, I.~Cioc\u{a}nea-Teodorescu \cite{Iuliana},
using different and more elaborate techniques,
greatly generalized our result, dropping the commutativity
assumption on the finite ring $R$ and finding, for any given
finite $R$-module $M$, a set of generators for $M$ of smallest possible size. 

See Chapter 8 of 
\cite{AtiyahMcD} for commutative algebra background.
For the purposes of this paper, commutative rings have an identity
element $1$, which may be $0$. 

\section{Lemmas on commutative rings}

If $R$ is a commutative ring and ${\boldsymbol{a}}$ is an ideal in $R$, let
$\Ann_R{\boldsymbol{a}}$ denote the annihilator of ${\boldsymbol{a}}$ in $R$.
We will use that every finite commutative ring is an Artin ring, 
that every Artin ring is isomorphic to a finite direct product of
local Artin rings, and that the maximal ideal in a local Artin ring
is always nilpotent. 

\begin{lem}
\label{localsquarelem}
If $A$ is a local Artin ring,
${\boldsymbol{a}}$ is an ideal in $A$, and  
${\boldsymbol{a}}^2 = {\boldsymbol{a}}$,
then ${\boldsymbol{a}}$ is $0$ or $A$.
\end{lem}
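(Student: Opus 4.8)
The plan is to split into two cases according to whether $\boldsymbol{a}$ is contained in the maximal ideal $\m$ of $A$ or not, using in the first case only that $\m$ is nilpotent.

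First I would record the elementary observation that $\boldsymbol{a}^2=\boldsymbol{a}$ forces $\boldsymbol{a}^n=\boldsymbol{a}$ for every integer $n\ge 1$. This is an immediate induction: assuming $\boldsymbol{a}^n=\boldsymbol{a}$, one has $\boldsymbol{a}^{n+1}=\boldsymbol{a}\cdot\boldsymbol{a}^n=\boldsymbol{a}\cdot\boldsymbol{a}=\boldsymbol{a}^2=\boldsymbol{a}$.

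Next, if $\boldsymbol{a}\not\subseteq\m$, then since $A$ is local and $\m$ is its unique maximal ideal, every proper ideal of $A$ lies in $\m$; hence $\boldsymbol{a}$ is not proper, i.e. $\boldsymbol{a}=A$. Otherwise $\boldsymbol{a}\subseteq\m$, and because $A$ is a local Artin ring its maximal ideal is nilpotent, say $\m^N=0$ for some $N\ge 1$. Then by the first observation $\boldsymbol{a}=\boldsymbol{a}^N\subseteq\m^N=0$, so $\boldsymbol{a}=0$. This exhausts the two cases and proves the lemma.

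There is no real obstacle here: the argument is short and uses only the two standard facts recalled in the text just above the statement, namely that a proper ideal of a local ring is contained in the maximal ideal and that the maximal ideal of a local Artin ring is nilpotent. I note that in the case $\boldsymbol{a}\subseteq\m$ one could instead invoke Nakayama's lemma — $\boldsymbol{a}$ is finitely generated since $A$ is Noetherian, and $\boldsymbol{a}=\boldsymbol{a}^2\subseteq\m\boldsymbol{a}$ — but the nilpotence argument is marginally cleaner and keeps the proof self-contained relative to the facts already cited.
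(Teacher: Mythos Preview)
Your proof is correct and is essentially the same as the paper's: the paper also splits into the two cases (phrased as ``$\boldsymbol{a}$ contains a unit'' versus ``$\boldsymbol{a}\subset\m$'') and in the second case uses the chain $\boldsymbol{a}=\boldsymbol{a}^2=\cdots=\boldsymbol{a}^r\subset\m^r=0$. Your aside about Nakayama is a reasonable alternative but, as you note, unnecessary here.
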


\begin{proof}
If ${\boldsymbol{a}}$ contains a unit, then ${\boldsymbol{a}}=A$. 
Otherwise, ${\boldsymbol{a}}$ is contained in the maximal ideal ${\boldsymbol{m}}$,
which is nilpotent. Thus there is an $r\in \Z_{>0}$ such that
${\boldsymbol{m}}^r=0$. 
Now ${\boldsymbol{a}} = {\boldsymbol{a}}^2 = \cdots = {\boldsymbol{a}}^r \subset
{\boldsymbol{m}}^r = 0$.
\end{proof}

\begin{lem}
\label{nonzeronilpotlem}
Suppose that $A$ is a finite commutative ring,
${\boldsymbol{a}}$ is an ideal in $A$, 
${\boldsymbol{b}} = \Ann_A{\boldsymbol{a}}$,
and  
${\boldsymbol{a}} \cap {\boldsymbol{b}} = 0$.
Then:
\begin{enumerate}
\item 
${\boldsymbol{a}}^2 = {\boldsymbol{a}}$;
\item
there is an idempotent $e\in A$
such that ${\boldsymbol{a}}= eA$, ${\boldsymbol{b}}=(1-e)A$, and
$A = (1-e)A \oplus eA = {\boldsymbol{b}} \oplus {\boldsymbol{a}}$;
\item
if ${\boldsymbol{b}}=0$ then ${\boldsymbol{a}}=A$.
\end{enumerate}
\end{lem}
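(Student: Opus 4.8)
The plan is to establish part (i) first, since it feeds directly into part (ii) via Lemma \ref{localsquarelem}, and part (iii) is then an immediate special case of part (ii). For part (i), I would use that a finite commutative ring $A$ is Artin, hence a finite product of local Artin rings $A = \prod_i A_i$. Ideals and annihilators decompose componentwise along this product, so $\boldsymbol{a} = \prod_i \boldsymbol{a}_i$ and $\boldsymbol{b} = \Ann_A\boldsymbol{a} = \prod_i \Ann_{A_i}\boldsymbol{a}_i$, and the hypothesis $\boldsymbol{a}\cap\boldsymbol{b} = 0$ says $\boldsymbol{a}_i \cap \Ann_{A_i}\boldsymbol{a}_i = 0$ for each $i$. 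In a local Artin ring $A_i$, if $\boldsymbol{a}_i$ is not all of $A_i$ it lies in the maximal ideal $\boldsymbol{m}_i$; I would argue that $\boldsymbol{a}_i\cap\Ann_{A_i}\boldsymbol{a}_i = 0$ together with nilpotence of $\boldsymbol{m}_i$ forces $\boldsymbol{a}_i = 0$, because if $\boldsymbol{a}_i\neq 0$ then picking $r$ minimal with $\boldsymbol{a}_i^r = 0$ (such $r\ge 2$ exists since $\boldsymbol{a}_i\subset\boldsymbol{m}_i$ is nilpotent), the nonzero ideal $\boldsymbol{a}_i^{r-1}$ lies in both $\boldsymbol{a}_i$ and $\Ann_{A_i}\boldsymbol{a}_i$, a contradiction. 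Hence in each component $\boldsymbol{a}_i$ is $0$ or $A_i$, so $\boldsymbol{a}_i^2 = \boldsymbol{a}_i$, and therefore $\boldsymbol{a}^2 = \boldsymbol{a}$ globally. (Alternatively one can avoid the explicit decomposition: localize at each maximal ideal and invoke Lemma \ref{localsquarelem} after showing $\boldsymbol{a}$ becomes idempotent locally; but the componentwise argument is cleaner for a finite ring.)

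For part (ii), I would exploit that an idempotent ideal in a Noetherian (here Artin) ring is generated by an idempotent element. Concretely, from the componentwise description $\boldsymbol{a} = \prod_i \boldsymbol{a}_i$ with each $\boldsymbol{a}_i \in \{0, A_i\}$, let $e = (e_i)_i$ where $e_i = 1$ if $\boldsymbol{a}_i = A_i$ and $e_i = 0$ if $\boldsymbol{a}_i = 0$. Then $e$ is idempotent, $\boldsymbol{a} = eA$, and $\Ann_A(eA) = (1-e)A$ by a direct check in each component (the annihilator of $A_i$ is $0$, the annihilator of $0$ is $A_i$), so $\boldsymbol{b} = (1-e)A$. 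The decomposition $A = (1-e)A \oplus eA$ is the standard splitting attached to an idempotent, and under it $(1-e)A = \boldsymbol{b}$ and $eA = \boldsymbol{a}$. A slicker route that sidesteps naming components: since $\boldsymbol{a}^2 = \boldsymbol{a}$ and $\boldsymbol{a}$ is finitely generated, Nakayama-type reasoning (or the standard determinant trick) produces $e\in\boldsymbol{a}$ with $ex = x$ for all $x\in\boldsymbol{a}$; then $e$ is idempotent, $\boldsymbol{a} = eA$, and $1-e$ annihilates $\boldsymbol{a}$ so $1-e\in\boldsymbol{b}$, while conversely any $y\in\boldsymbol{b}$ satisfies $y = (1-e)y + ey = (1-e)y$ since $ey\in\boldsymbol{a}\cap\boldsymbol{b} = 0$, giving $\boldsymbol{b} = (1-e)A$ and the direct sum decomposition.

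Part (iii) follows at once: if $\boldsymbol{b} = (1-e)A = 0$ then $1 - e = 0$, so $e = 1$ and $\boldsymbol{a} = eA = A$. I do not anticipate a serious obstacle here; the only place requiring care is the step in part (i) that upgrades "$\boldsymbol{a}\cap\Ann\boldsymbol{a} = 0$'' to "$\boldsymbol{a}$ is locally $0$ or the whole ring,'' where one must correctly use minimality of the nilpotence exponent to locate a nonzero ideal contained in both $\boldsymbol{a}$ and its annihilator. Everything else is bookkeeping with the product decomposition of an Artin ring and the elementary theory of idempotents.
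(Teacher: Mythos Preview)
Your proposal is correct and follows essentially the same route as the paper: decompose $A$ into local Artin factors, use the minimal nilpotence exponent of $\boldsymbol{a}_i$ to show $\boldsymbol{a}_i^{r-1}\subset\boldsymbol{a}_i\cap\Ann_{A_i}\boldsymbol{a}_i$ forces each $\boldsymbol{a}_i\in\{0,A_i\}$, and then read off the idempotent componentwise (the paper phrases this last step as an appeal to Lemma~\ref{localsquarelem}). Your added details on verifying $\boldsymbol{b}=(1-e)A$ and the alternative Nakayama/determinant-trick route for (ii) are fine supplements but not needed to match the paper.
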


\begin{proof}
Write $A$ as a finite direct product of
local Artin rings $A_1 \times \cdots \times A_s$. 
Then ${\boldsymbol{a}}$ is a direct product 
${\boldsymbol{a}}_1 \times \cdots \times {\boldsymbol{a}}_s$
of ideals 
${\boldsymbol{a}}_i \subset A_i$. 
Assume ${\boldsymbol{a}}^2 \neq {\boldsymbol{a}}$.
Then there is an $i$ such that 
${\boldsymbol{a}}_i^2 \neq {\boldsymbol{a}}_i$.
Let ${\boldsymbol{b}}_i = \Ann_{A_i}{\boldsymbol{a}}_i$.
Since ${\boldsymbol{a}} \cap {\boldsymbol{b}} = 0$, it follows that 
${\boldsymbol{a}}_i \cap {\boldsymbol{b}}_i = 0$.
Since $A_i$ is a local ring,  ${\boldsymbol{a}}_i$ is 
contained in the maximal ideal of $A_i$,
so ${\boldsymbol{a}}_i$ is nilpotent. 
Let $r$ denote the smallest positive integer such that 
${\boldsymbol{a}}_i^r = 0$. Since ${\boldsymbol{a}}_i \neq 0$ we have
$r \ge 2$.
Then ${\boldsymbol{a}}_i^{r-1}$ is contained in ${\boldsymbol{a}}_i$
and kills ${\boldsymbol{a}}_i$, so 
$0 \neq {\boldsymbol{a}}_i^{r-1} \subset {\boldsymbol{a}}_i \cap {\boldsymbol{b}}_i = 0$,
a contradiction. This gives (i).

Since $A$ is a finite product of local Artin rings, 
${\boldsymbol{a}}$ is generated by an idempotent $e$,
by Lemma \ref{localsquarelem}.
Then ${\boldsymbol{b}}=(1-e)A$ and
$A = (1-e)A \oplus eA = {\boldsymbol{b}} \oplus {\boldsymbol{a}}$.
This gives (ii) and (iii).
\end{proof}

\section{Preparatory lemmas}

If $R$ is a commutative ring, then a commutative $R$-algebra is a commutative ring
$A$ equipped with a ring homomorphism from $R$ to $A$.
Whenever $A$ is an $R$-algebra, we let $M_A$ denote
the $A$-module $A \otimes_R M$.

From now on,
suppose $R$ is finite commutative ring and $M$ is a finite $R$-module.
Let $\SS$ denote the set of quadruples $(A,B,y,N)$
such that:
\begin{enumerate}
\item
 $A$ and $B$ are finite commutative $R$-algebras for which
the natural map 
$f : R \onto A \times B$ is surjective and has 
nilpotent kernel, 
\item
$y\in M$ is such that the map
$B \to M_B = B \otimes_R M$ defined by $b \mapsto b\otimes y$ is an isomorphism
and such that $1\otimes y= 0$ in $M_A$,
\item
and $N$ is a submodule of $M$ such that
the natural map $N \to M_A$ defined by $z \mapsto 1\otimes z$ is onto
and such that the natural map $N \to M_B$ is the zero map.
\end{enumerate}
In Algorithm \ref{finiteringsalgor} below,
initially we take $(A,B,y,N) = (R,0,0,M)$. 
Clearly, $(R,0,0,M)\in\SS$. 
Throughout that algorithm, we always have $(A,B,y,N)\in\SS$.
While $A$ and $B$ occur in the proof of correctness of Algorithm \ref{finiteringsalgor},
the $R$-algebra $B$ does not actually occur in the algorithm itself.

\begin{lem}
\label{SMA0lem}
If $(A,B,y,N)\in \SS$ and $M_A=0$, then $M=Ry$.
\end{lem}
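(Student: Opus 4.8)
The plan is to reduce modulo the nilpotent kernel of $f$, use the surjectivity built into condition (ii) to see that $M$ is generated by $y$ after that reduction, and then finish with a Nakayama-style iteration.

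First I would put $I=\ker(f)$, where $f\colon R\onto A\times B$ is the map in condition (i); by hypothesis $I$ is nilpotent, say $I^k=0$. Since $f$ is surjective we may identify $A\times B$ with $R/I$, and therefore $M_{A\times B}=(A\times B)\otimes_R M$ with $(R/I)\otimes_R M=M/IM$, in such a way that the canonical map $M\to M_{A\times B}$, $z\mapsto 1\otimes z$, becomes the reduction map $M\to M/IM$. Using the idempotent $(1,0)$ of $A\times B$ I would also split $M_{A\times B}$ as $M_A\oplus M_B$ (as $(A\times B)$-modules, hence as $R$-modules), compatibly with the canonical maps of $M$ into the two factors.

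Now assume $M_A=0$. Combining the two identifications yields an isomorphism $M/IM\isom M_B$ under which $z+IM\mapsto 1\otimes z$. By condition (ii), $b\mapsto b\otimes y=b\cdot(1\otimes y)$ is an isomorphism $B\isom M_B$; in particular $M_B$ is generated as a $B$-module, and hence (since $R\to B$ is onto) as an $R$-module, by $1\otimes y$. Transporting this back through the isomorphism, $M/IM$ is generated over $R$ by $y+IM$, i.e.\ $M=Ry+IM$. Then $IM=I(Ry+IM)\subseteq Ry+I^2M$, so $M=Ry+I^2M$, and inductively $M=Ry+I^nM$ for all $n\ge1$; taking $n=k$ gives $M=Ry$.

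The computations involved are all formal; the only step that needs a moment's care is checking that the isomorphisms $M_{A\times B}\cong M/IM$ and $M_{A\times B}\cong M_A\oplus M_B$ are compatible with the canonical maps out of $M$, which is what lets ``$M_B$ is generated by $1\otimes y$'' translate into ``$M=Ry+IM$''. Note that neither the submodule $N$ nor any property of $B$ beyond condition (ii) is needed for this particular lemma.
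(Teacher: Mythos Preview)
Your proof is correct and follows essentially the same strategy as the paper's: show $M=Ry+JM$ where $J=\ker f$ is nilpotent, then iterate. The only cosmetic difference is that you reach $M/JM\cong M_B$ via the idempotent decomposition $M_{A\times B}\cong M_A\oplus M_B$, whereas the paper instead shows $I_BM=JM$ directly by the ideal computation $JM\subseteq I_BM=I_BI_AM\subseteq(I_A\cap I_B)M=JM$ (using $I_AM=M$ from $M_A=0$).
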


\begin{proof}
Let $J$ denote the kernel of $f : R \onto A \times B$, and 
let
$I_A$ (resp., $I_B$) denote the kernel of the composition
of $f$ with projection from $A \times B$ onto $A$ (resp., $B$).
Since  $J$ is nilpotent we have $J^r = 0$ for some $r\in \Z_{>0}$.
Since  
$0 = M_A = A \otimes_R M = (R/I_A) \otimes_R M \cong M/I_AM$
it follows that $I_AM = M$ 
Since $JM \subseteq I_BM = I_BI_AM \subseteq (I_B \cap I_A)M = JM$, it follows that
$JM = I_BM$.
Letting  $y' = (y \mod I_BM) \in M/I_BM$, then $M_B \cong M/I_BM = By'$. Thus,
\begin{multline*}
M = Ry + I_BM = Ry + JM = Ry + J(Ry + JM) \\ = Ry + J^2M = \ldots
 = Ry + J^rM = Ry.
\end{multline*}
\end{proof}

\begin{lem}
\label{xexistslem}
Suppose $(A,B,y,N)\in \SS$ and $M_A \neq 0$. Then there exists 
$x\in N$ such that $1 \otimes x \neq 0$ in
$M_A$. Choosing $x$ and
letting ${\boldsymbol{a}} = \Ann_A (1 \otimes x)$ and 
${\boldsymbol{b}} = \Ann_A{\boldsymbol{a}}$, we have:
\begin{enumerate}
\item
$(A/({\boldsymbol{a}} \cap {\boldsymbol{b}}),B,y,N)\in\SS;$
\item
If ${\boldsymbol{a}} \cap {\boldsymbol{b}} =0$ and
$(A/{\boldsymbol{a}}) \otimes x = M_{A/{\boldsymbol{a}}}$, 
then
$(A/{\boldsymbol{b}},(A/{\boldsymbol{a}}) \times B,x+y,{\boldsymbol{a}}N)\in\SS$,
where ${\boldsymbol{a}}N$ denotes $f^{-1}({\boldsymbol{a}}\times B)N$.
\item
If ${\boldsymbol{a}} \cap {\boldsymbol{b}} =0$ and
$(A/{\boldsymbol{a}}) \otimes x \neq M_{A/{\boldsymbol{a}}}$, 
then $M$ is not cyclic.
\end{enumerate}
\end{lem}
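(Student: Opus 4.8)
The plan is to dispose of the existence of $x$ in one line and then to verify each of (i)--(iii) by checking, one at a time, the three conditions defining membership in $\SS$ for the proposed new quadruple, leaning throughout on the idempotent decomposition of $A$ furnished by Lemma \ref{nonzeronilpotlem}. For the existence: since $N\to M_A$ is onto and $M_A\neq0$, some $x\in N$ has $1\otimes x\neq0$ in $M_A$. Two preliminary facts are used repeatedly. First, $(\boldsymbol{a}\cap\boldsymbol{b})^2=0$, because $\boldsymbol{b}=\Ann_A\boldsymbol{a}$. Secondly, once $\boldsymbol{a}\cap\boldsymbol{b}=0$, Lemma \ref{nonzeronilpotlem} gives an idempotent $e\in A$ with $\boldsymbol{a}=eA$, $\boldsymbol{b}=(1-e)A$, $A=\boldsymbol{b}\oplus\boldsymbol{a}$; tensoring the resulting product decomposition $A\cong(A/\boldsymbol{a})\times(A/\boldsymbol{b})$ with $M$ over $R$ yields $M_A=(1-e)M_A\oplus eM_A$ with $M_{A/\boldsymbol{a}}\cong(1-e)M_A$ and $M_{A/\boldsymbol{b}}\cong eM_A$. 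Under these identifications $1\otimes x$ lies in the summand $(1-e)M_A\cong M_{A/\boldsymbol{a}}$ (as $e\in\Ann_A(1\otimes x)$ kills it), and its annihilator in $A/\boldsymbol{a}\cong(1-e)A$ is $(1-e)A\cap\Ann_A(1\otimes x)=\boldsymbol{b}\cap\boldsymbol{a}=0$; so it generates a free rank-one submodule of $M_{A/\boldsymbol{a}}$, which is precisely $(A/\boldsymbol{a})\otimes x$.

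\emph{Parts (i) and (ii).} For (i) the three conditions for $(A/(\boldsymbol{a}\cap\boldsymbol{b}),B,y,N)$ are routine: the composite surjection $R\onto(A/(\boldsymbol{a}\cap\boldsymbol{b}))\times B$ has kernel $J'$ with $(J')^2$ contained in the nilpotent kernel $J$ of $R\onto A\times B$ (using $(\boldsymbol{a}\cap\boldsymbol{b}\times0)^2=0$), hence $J'$ is nilpotent; and the conditions on $y$ and $N$ only become easier under the extra quotient, while the isomorphism $B\isom M_B$ is untouched. For (ii) I again verify the three conditions for $(A/\boldsymbol{b},(A/\boldsymbol{a})\times B,x+y,\boldsymbol{a}N)$, where $\boldsymbol{a}N=f^{-1}(\boldsymbol{a}\times B)N$. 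Condition (1) is immediate from $A\times B\cong(A/\boldsymbol{b})\times\bigl((A/\boldsymbol{a})\times B\bigr)$, which preserves the surjection from $R$ and its nilpotent kernel. Condition (3): for $r\in f^{-1}(\boldsymbol{a}\times B)$ and $n\in N$ the image of $rn$ in $M_A$ is $f(r)_A(1\otimes n)$ with $f(r)_A\in\boldsymbol{a}$, and since $N\to M_A$ is onto the image of $\boldsymbol{a}N$ in $M_A$ is $\boldsymbol{a}M_A=eM_A$, identifying with all of $M_{A/\boldsymbol{b}}$; and the images of $\boldsymbol{a}N$ in $M_{A/\boldsymbol{a}}$ and in $M_B$ vanish because $f(r)_A$ dies in $A/\boldsymbol{a}$ and because $N\to M_B$ is already zero.

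Condition (2) for part (ii) is where the argument concentrates, and I expect it to be the main obstacle, since it draws on essentially all the hypotheses at once. Writing $M_{(A/\boldsymbol{a})\times B}=M_{A/\boldsymbol{a}}\times M_B$, one has $(c_1,c_2)\otimes(x+y)=\bigl(c_1\otimes x+c_1\otimes y,\;c_2\otimes x+c_2\otimes y\bigr)$; here $1\otimes y=0$ in $M_A$, hence in $M_{A/\boldsymbol{a}}$ (old condition (2)), and $1\otimes x=0$ in $M_B$ because $x\in N$ and $N\to M_B$ is the zero map (old condition (3)), so this collapses to $\bigl(c_1\otimes x,\;c_2\otimes y\bigr)$. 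The first coordinate is an isomorphism $A/\boldsymbol{a}\isom M_{A/\boldsymbol{a}}$: injective because $1\otimes x$ has annihilator $0$ in $A/\boldsymbol{a}$ (noted above), and surjective by the hypothesis $(A/\boldsymbol{a})\otimes x=M_{A/\boldsymbol{a}}$; the second coordinate is the isomorphism $B\isom M_B$ coming from membership of the old tuple in $\SS$. Finally $1\otimes(x+y)=0$ in $M_{A/\boldsymbol{b}}\cong eM_A$, since $e$ kills $1\otimes x$ (it lies in $(1-e)M_A$) and $1\otimes y=0$.

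\emph{Part (iii).} If $M=Rm_0$ were cyclic, then applying $(A/\boldsymbol{a})\otimes_R-$ to $R\onto M$ shows $M_{A/\boldsymbol{a}}=(A/\boldsymbol{a})(1\otimes m_0)$ is a cyclic $(A/\boldsymbol{a})$-module, so $|M_{A/\boldsymbol{a}}|\leq|A/\boldsymbol{a}|$; but $M_{A/\boldsymbol{a}}$ contains the free rank-one submodule $(A/\boldsymbol{a})\otimes x$, of cardinality $|A/\boldsymbol{a}|$, forcing $(A/\boldsymbol{a})\otimes x=M_{A/\boldsymbol{a}}$, contrary to the hypothesis of (iii); hence $M$ is not cyclic. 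Apart from condition (2) of part (ii), the only thing to be careful about is the bookkeeping of the tensor--quotient identifications (in particular, locating $1\otimes x$ in the correct idempotent summand and computing its annihilator) and of which of the four data is responsible for which of the three conditions.
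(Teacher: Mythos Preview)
Your proof is correct and follows essentially the same approach as the paper: the existence of $x$, the nilpotence of $\boldsymbol{a}\cap\boldsymbol{b}$ via $(\boldsymbol{a}\cap\boldsymbol{b})^2=0$, the idempotent splitting from Lemma~\ref{nonzeronilpotlem}, the location of $1\otimes x$ in the $(1-e)$-summand with trivial annihilator in $A/\boldsymbol{a}$, and the finiteness/cardinality argument for (iii) all mirror the paper's proof. You supply more detail in places the paper leaves as ``easy to check'' (notably condition~(3) for part~(ii)), but there is no substantive difference in strategy.
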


\begin{proof}
Since the map $N \to M_A$, $z \mapsto 1\otimes z$ is onto, 
as long as $M_A \neq 0$ there exists $x\in N$ such that $1 \otimes x \neq 0$ in
$M_A$. 

Since ${\boldsymbol{a}}{\boldsymbol{b}} = 0$, we have
$({\boldsymbol{a}} \cap {\boldsymbol{b}})^2 = 0$, so
${\boldsymbol{a}} \cap {\boldsymbol{b}}$ is a nilpotent ideal in $A$.
It follows that $(A/({\boldsymbol{a}} \cap {\boldsymbol{b}}),B,y,N)\in\SS$,
giving (i).

From now on, suppose that ${\boldsymbol{a}} \cap {\boldsymbol{b}} = 0$.
By Lemma \ref{nonzeronilpotlem},
there is an idempotent $e\in A$
such that ${\boldsymbol{a}}= eA$, ${\boldsymbol{b}}=(1-e)A$, and
$A = (1-e)A \oplus eA = {\boldsymbol{b}} \oplus {\boldsymbol{a}}$.
It follows that
$A \isom A/{\boldsymbol{a}} \times A/{\boldsymbol{b}}$, so
$M_A \isom M_{A/{\boldsymbol{a}}} \times M_{A/{\boldsymbol{b}}}$.
If $(x',x'')$ is the image of $1 \otimes x$ under the latter map, then  $x''=0$
(we have ${\boldsymbol{b}}x''=0$ since $x''\in (A/{\boldsymbol{b}})\otimes_R M$,
and ${\boldsymbol{a}}x''=0$ since ${\boldsymbol{a}}(1 \otimes x)=0$;
thus $Ax'' = ({\boldsymbol{a}}+{\boldsymbol{b}})x''=0$, so $x''=0$).
The map $i_{\boldsymbol{a}} : A/{\boldsymbol{a}}\to M_{A/{\boldsymbol{a}}}$
defined by $i_{\boldsymbol{a}}(t) = t x' = t\otimes x$
is injective since $\Ann_{A/{\boldsymbol{a}}} x' = 0$.

First suppose $(A/{\boldsymbol{a}}) \otimes x = M_{A/{\boldsymbol{a}}}$.
Then the injective map $i_{\boldsymbol{a}}$ is an isomorphism.
Since $0 = x'' = 1_{A/{\boldsymbol{b}}} \otimes x$, we have
$1 \otimes (x+y) = 0$ in $M_{A/{\boldsymbol{b}}}$.
It is now easy to check that
$(A/{\boldsymbol{b}},(A/{\boldsymbol{a}}) \times B,x+y,{\boldsymbol{a}}N)\in\SS$,
giving (ii).
Note that ${\boldsymbol{b}} \neq 0$ (if ${\boldsymbol{b}} = 0$,
then ${\boldsymbol{a}} = A$ by Lemma \ref{nonzeronilpotlem}, 
contradicting that $1\otimes x\neq 0$ in $M_A$).

Now suppose that $(A/{\boldsymbol{a}}) \otimes x \neq M_{A/{\boldsymbol{a}}}$.
By way of contradiction, suppose $M$ is a cyclic $R$-module.
Then $M_{A/{\boldsymbol{a}}}$ is a cyclic $A/{\boldsymbol{a}}$-module.
Since the domain and codomain of 
$i_{\boldsymbol{a}} : A/{\boldsymbol{a}}\hookrightarrow M_{A/{\boldsymbol{a}}}$ 
are both finite, 
it now follows that
$i_{\boldsymbol{a}}$ is surjective, so
$(A/{\boldsymbol{a}}) \otimes x = M_{A/{\boldsymbol{a}}}$.
This contradiction gives (iii).
\end{proof}

The intuition behind Algorithm \ref{finiteringsalgor}
is that throughout the algorithm, 
$y$ generates the ``non-$A$ part'' of $M$, and
the goal is to shrink the ``$A$-part'' of $M$, namely $N$.

\section{Main algorithm}

\begin{algorithm}
\label{finiteringsalgor}
Input a finite commutative ring $R$ 
and a finite $R$-module $M$.
Decide whether there exists $y\in M$ such that $M = Ry$, and if there is, 
find such a $y$.

\begin{enumerate}
\item
Initially, take $A=R$, $y=0$, and $N=M$.
\item
If $M_A = 
0$, stop and output ``yes'' with generator $y$.
\item
Otherwise, pick $x\in N$ such that $1 \otimes x \neq 0$ in
$M_A$,
and compute
${\boldsymbol{a}} = \Ann_A (1 \otimes x)$, 
${\boldsymbol{b}} = \Ann_A{\boldsymbol{a}}$,
and ${\boldsymbol{a}} \cap {\boldsymbol{b}}$.
\item
If ${\boldsymbol{a}} \cap {\boldsymbol{b}} \neq 0$, 
replace 
$A$ by $A/({\boldsymbol{a}} \cap {\boldsymbol{b}})$ and 
go back to step (ii).
\item
If ${\boldsymbol{a}} \cap {\boldsymbol{b}} = 0$, 
then if $(A/{\boldsymbol{a}}) \otimes x \neq M_{A/{\boldsymbol{a}}}$
terminate with ``no'', and
if $(A/{\boldsymbol{a}}) \otimes x = M_{A/{\boldsymbol{a}}}$
replace $A$, $y$, and $N$ by 
$A/{\boldsymbol{b}}$, $x + y$, and ${\boldsymbol{a}}N$, respectively, and 
go back to step (ii).
\end{enumerate}
\end{algorithm}

\begin{prop}
\label{finiteringsprop}
Algorithm \ref{finiteringsalgor} runs in polynomial time,
and on input a finite commutative ring $R$ 
and a finite $R$-module $M$,
decides whether there exists $y\in M$ such that $M = Ry$, and if there is, 
finds such a $y$.
\end{prop}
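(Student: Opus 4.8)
The plan is to prove the two halves of the statement separately: that Algorithm \ref{finiteringsalgor} is correct, and that it runs in polynomial time.

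For correctness, the key is a loop invariant: every time step (ii) is reached, there is a finite commutative $R$-algebra $B$ such that $(A,B,y,N)\in\SS$, where $A$, $y$, $N$ are the current values of the variables. I would establish this by induction on the number of executions of step (ii). The base case is the initialisation $(A,y,N)=(R,0,M)$, together with the fact, already observed, that $(R,0,0,M)\in\SS$. For the inductive step, note that control can return to step (ii) only through step (iv) or step (v); in the first case the new quadruple lies in $\SS$ by Lemma \ref{xexistslem}(i), and in the second by Lemma \ref{xexistslem}(ii). One also needs that step (iii) can be carried out, i.e.\ that an $x\in N$ with $1\otimes x\neq 0$ in $M_A$ exists when $M_A\neq 0$; this is the opening assertion of Lemma \ref{xexistslem}, which holds because the definition of $\SS$ makes $N\to M_A$ surjective. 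Granting the invariant, the outputs are correct: if the algorithm stops in step (ii) then $M_A=0$ and Lemma \ref{SMA0lem} yields $M=Ry$, so the answer ``yes'' with generator $y$ is correct; if it stops in step (v) then ${\boldsymbol{a}}\cap{\boldsymbol{b}}=0$ and $(A/{\boldsymbol{a}})\otimes x\neq M_{A/{\boldsymbol{a}}}$, so $M$ is not cyclic by Lemma \ref{xexistslem}(iii), and the answer ``no'' is correct.

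For termination and the running time, I would use $\#A$ as a strictly decreasing monovariant. Each time control returns to step (ii) it does so via step (iv) or step (v), and in both cases $A$ is replaced by a proper quotient: in step (iv) because ${\boldsymbol{a}}\cap{\boldsymbol{b}}\neq 0$, and in step (v) because ${\boldsymbol{b}}\neq 0$ --- otherwise ${\boldsymbol{a}}=A$ by Lemma \ref{nonzeronilpotlem}, contradicting $1\otimes x\neq 0$ in $M_A$, a point recorded in the proof of Lemma \ref{xexistslem}. Since a proper quotient of a finite ring has at most half the cardinality, $\#A$ at least halves per iteration, so there are at most $\log_2\#R$ iterations, a quantity bounded by the bit-size of the input. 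It then suffices to bound the cost of one iteration. For this I would have the algorithm carry, besides $A$, $y$, $N$, the structural surjection $\pi\colon R\onto A$ (initially the identity, and composed with the relevant quotient map at each of steps (iv) and (v)); this is enough to compute ${\boldsymbol{a}}N=f^{-1}({\boldsymbol{a}}\times B)N$, because that ideal of $R$ is precisely $\pi^{-1}({\boldsymbol{a}})$, so $B$ is never constructed. Every remaining operation --- building the finite abelian groups and rings $M_A$, $A/{\boldsymbol{a}}$, $A/{\boldsymbol{b}}$, $M_{A/{\boldsymbol{a}}}$ from presentations; scanning the generators of $N$ for a suitable $x$; computing ${\boldsymbol{a}}=\Ann_A(1\otimes x)$, then ${\boldsymbol{b}}=\Ann_A{\boldsymbol{a}}$, and the intersection ${\boldsymbol{a}}\cap{\boldsymbol{b}}$; testing whether $(A/{\boldsymbol{a}})\otimes x=M_{A/{\boldsymbol{a}}}$; and forming $\pi^{-1}({\boldsymbol{a}})N$ --- amounts to linear algebra over $\Z$ on the given presentations, hence reduces to Hermite and Smith normal form computations, which run in polynomial time. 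Since $\#A$ and the sizes of the auxiliary objects only decrease, all of them stay presentable with bit-size polynomial in the original input, so nothing blows up over the course of the run.

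The part I expect to demand the most care is exactly this bookkeeping: making precise which data travels between iterations, checking that the tensor products $A\otimes_R M$ and the quotient constructions respect the stated input conventions, and justifying that ${\boldsymbol{a}}N$ can be obtained as a preimage under $\pi$ so that the ``ghost'' algebra $B$ plays no computational role. The genuinely algebraic content --- that the three branches of Lemma \ref{xexistslem} exhaust the possibilities and behave as claimed --- is already isolated in the preparatory lemmas, so what remains is essentially a verification that each elementary step is both effective and inexpensive.
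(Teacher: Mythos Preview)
Your proposal is correct and follows essentially the same approach as the paper's proof: correctness via the preparatory lemmas (Lemma~\ref{SMA0lem} for step~(ii), Lemma~\ref{xexistslem} for steps~(iii)--(v)), termination and the iteration bound via the observation that $\#A$ at least halves each pass, and per-iteration cost via linear algebra over~$\Z$. You have simply made explicit several points the paper leaves terse---the loop invariant $(A,B,y,N)\in\SS$, the reason ${\boldsymbol{b}}\neq 0$ in step~(v), and the device of carrying $\pi\colon R\onto A$ so that ${\boldsymbol{a}}N=\pi^{-1}({\boldsymbol{a}})N$ can be computed without ever materialising $B$.
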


\begin{proof}
Since $A$ is a finite ring, if the algorithm does not  stop with ``no'' 
then eventually $A=0$ and $M_A=0$.
Step (ii) of the algorithm is justified
by Lemma \ref{SMA0lem}, while steps (iii), (iv), and (v) are justified
by Lemma \ref{xexistslem}.

The computations of annihilators and of the decompositions
$A \isom A/{\boldsymbol{a}} \times A/{\boldsymbol{b}}$
can be done in polynomial time using linear algebra (see \S 14 of \cite{HWLMSRI});
in particular, ${\boldsymbol{a}}$ is the kernel of the map
$A \to M_A$ defined by $t \mapsto t(1\otimes x)$.
For any $B$, compute $M_B$ by computing $M/I_BM$ (and analogously
for $M_A$).
Each new $A$ is at most half the size of the  $A$ it replaces. 
This implies that the number of steps is at most linear in the length
of the input.
\end{proof}


\begin{thebibliography}{99}
	\bibitem{AtiyahMcD}
M.\ F.\ Atiyah and I.\ G.\ Macdonald,  
Introduction to commutative algebra, 
Addison-Wesley Publishing Co., Reading, MA, 1969.
	\bibitem{Iuliana}
I.\ Cioc\u{a}nea-Teodorescu, 
{\em The module isomorphism problem for finite rings and related results},
Abstract of a talk at the CAAFRTA session of
the 20th Conference on Applications of Computer Algebra, 
July 10, 2014, 
\url{http://www.singacom.uva.es/~edgar/caafrta2014/files/Ciocanea.pdf}.
	\bibitem{HWLMSRI}
H.\ W.\ Lenstra, Jr., {\em  Lattices},
in Algorithmic number theory: lattices, number fields, curves and cryptography, 
Math.\ Sci.\ Res.\ Inst.\ Publ.\ {\bf 44}, Cambridge Univ.\ Press, Cambridge, 2008, 127--181.
	\bibitem{LenSil}
H.\ W.\ Lenstra, Jr.\ and A.\ Silverberg,
{\em Revisiting the Gentry-Szydlo Algorithm}, 
in Advances in Cryptology---CRYPTO 2014,  
Lect.\ Notes in Comp.\ Sci.\ {\bf  8616}, Springer, Berlin, 2014, 280--296.
	\bibitem{LwS}
H.\ W.\ Lenstra, Jr.\ and A.\ Silverberg,
{\em Lattices with symmetry}, submitted.
\end{thebibliography}
\end{document}